\theoremstyle{definition}
\newtheorem{dfn}{Definition}[section]
\theoremstyle{plain}
\newtheorem{thm}[dfn]{Theorem}
\newtheorem{prop}[dfn]{Proposition}
\newtheorem{cor}[dfn]{Corollary}
\theoremstyle{remark}
\newtheorem{rem}[dfn]{Remark}
\title{A short note on strong convergence of $q$-Gaussians}
\author{Akihiro Miyagawa}
\date{June 2023}
\begin{document}

\maketitle
\begin{abstract}
In this note, we prove strong convergence of $q$-Gaussians with respect to a parameter $q$, which implies the spectrum of any self-adjoint non-commutative polynomial in $q$-Gaussians is continuously deformed with respect to $q$. With Bo\.{z}ejko's Haagerup-type inequality, we follow Brannan's approach to proving the asymptotic strong convergence of the free orthogonal quantum group.    
\end{abstract}

\section{Introduction}
The concept of strong convergence has attracted substantial attention in the fields of free probability and Random matrix theory. This interest is largely driven by the tendency of particular multiple random matrices (for instance, independent GUE \cite{HT12} and Haar unitary \cite{CM14}) to demonstrate strong convergence to free random variables when their size goes to infinity. 

As an example of applications, once we can show strong convergence, we can estimate the operator norm of a polynomial in random matrices with sufficiently large sizes by computing that of their limit, which can be applied to show additivity violation of the minimum output entropy in quantum information \cite{Collins16}.
 Moreover, strong convergence of random matrices has applications to operator algebras. An early application is due to Haagerup and Thorbjørnsen \cite{HT12} who prove that $\mathrm{Ext}(C^*_{\mathrm{red}}(\mathbb{F}_2))$ is not a group. Another application was found by Hayes \cite{Hayes20} to reformulate the Peterson-Thom conjecture for free group factors, and it was subsequently proved by Belinschi-Capitaine \cite{BC22}, and Bordenave-Collins \cite{BC23}. 

Strong convergence also appears in group and quantum group theory. In particular, Brannan \cite{Brannan18} proved strong convergence of the free orthogonal quantum groups. His idea is to prove the Haagerup-type inequality which is also known as RD (Rapid Decay) property and to combine it with convergence in non-commutative distribution.   

In this paper, we prove strong convergence of $q$-Gaussians with respect to $q$, based on Brannan's method.
The $q$-Gaussian (also known as $q$-semicircle) distribution is an interpolation between Bernoulli ($q=-1$), semicircle ($q=0$), and Gaussian ($q=1$) with a parameter $-1 \le q \le 1$. This probability measure is classically known as a probability measure whose orthogonal polynomials are $q$-Hermite polynomials. In the non-commutative context, multi-variable $q$-Gaussians are originally introduced by Frisch and Bourret \cite{FB70}, 
and Bo\.{z}ejko, K\"{u}mmerer, Speicher (\cite{BKS97}, \cite{BS91}) realized $q$-Gaussians (and $q$-Brownian motion) as field operators on $q$-deformed Fock space, which is an $q$-analogue of Fermionic and Bosonic Fock spaces in quantum physics. 
Although the connections between $q$-Gaussians and quantum physics are not so much known, $q$-Gaussians are well-studied in operator algebra since they generate von Neumann algebras that share many properties with free group factors.

Now, we state our main result of strong convergence of $q$-Gaussians.
\begin{thm}
For any $-1<q_0<1$, strong convergence of $q$-Gaussians $A^{(q)}=(A_1^{(q)},\ldots,A_d^{(q)})$ holds at $q_0$, i.e. for any non-commutative polynomial $P$, 
$$\lim_{q\to q_0}\|P(A^{(q)})\| =\|P(A^{(q_0)})\|.$$
\end{thm}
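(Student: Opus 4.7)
The plan is to realize all the $A^{(q)}$ on the common $q$-Fock space $\mathcal{F}_q(\mathbb{C}^d)$ with vacuum vector $\Omega$ and vacuum trace $\tau_q$, so that $A_i^{(q)}=\ell_q(e_i)+\ell_q(e_i)^*$. By the $q$-Wick formula of Bo\.{z}ejko--Speicher, each joint moment $\tau_q(A_{i_1}^{(q)}\cdots A_{i_n}^{(q)})$ is a finite sum over pair partitions weighted by $q^{\mathrm{cr}(\pi)}$, and hence is polynomial in $q$; consequently $\tau_q(R(A^{(q)}))\to\tau_{q_0}(R(A^{(q_0)}))$ for every non-commutative polynomial $R$, giving convergence in non-commutative distribution for free. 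The easy half of the theorem then follows at once: from $\|P(A^{(q)})\|^{2n}\ge\tau_q((P^*P)^n(A^{(q)}))$, letting first $q\to q_0$ (by moment convergence) and then $n\to\infty$ (using that $\tau_{q_0}$ is a faithful trace, so $L^{2n}(\tau_{q_0})$-norms increase to the $C^*$-norm) yields $\|P(A^{(q_0)})\|\le\liminf_{q\to q_0}\|P(A^{(q)})\|$.

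The matching upper bound is the hard half. To obtain it I would invoke Bo\.{z}ejko's Haagerup-type inequality for $\mathcal{F}_q$: there is a constant $C(q)$, continuous in $q\in(-1,1)$, and an exponent $\alpha$, such that for each $k$ and each vector $\xi$ in the Fock-degree $\le k$ subspace,
\begin{equation*}
\|W_q(\xi)\|\le C(q)(k+1)^{\alpha}\|\xi\|_2,
\end{equation*}
where $W_q(\xi)$ is the unique element of the $q$-Gaussian algebra sending $\Omega$ to $\xi$. Every non-commutative polynomial $Q(A^{(q)})$ of degree $k$ coincides with $W_q(Q(A^{(q)})\Omega)$, with $Q(A^{(q)})\Omega$ of Fock degree $\le k$; taking $Q=(P^*P)^n$, which has degree $2n\deg P$, gives
\begin{equation*}
\|P(A^{(q)})\|^{2n}=\|(P^*P)^n(A^{(q)})\|\le C(q)(2n\deg P+1)^{\alpha}\,\tau_q((P^*P)^{2n}(A^{(q)}))^{1/2}.
\end{equation*}
Sending $q\to q_0$ and using continuity of $C$ at the interior point $q_0$ together with moment convergence for $(P^*P)^{2n}$ yields
\begin{equation*}
\limsup_{q\to q_0}\|P(A^{(q)})\|^{2n}\le C(q_0)(2n\deg P+1)^{\alpha}\,\tau_{q_0}((P^*P)^{2n}(A^{(q_0)}))^{1/2}.
\end{equation*}
Taking $2n$-th roots and letting $n\to\infty$, the factor $(2n\deg P+1)^{\alpha/2n}\to 1$, while $\tau_{q_0}((P^*P)^{2n}(A^{(q_0)}))^{1/(4n)}\to\|P(A^{(q_0)})\|$ by the spectral theorem, giving $\limsup_{q\to q_0}\|P(A^{(q)})\|\le\|P(A^{(q_0)})\|$.

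The main obstacle is to deploy Bo\.{z}ejko's inequality in exactly this packaged form: polynomial growth in the Fock degree (so that root extraction is harmless) and a constant that stays bounded as $q$ varies in a neighborhood of $q_0$ (so that the $q\to q_0$ limit commutes with the estimate). Both features are available in the $q$-Fock setting as long as $q_0$ stays strictly inside $(-1,1)$; the boundary points $\pm1$, where the Haagerup constants degenerate, are excluded by hypothesis. Once the inequality is in hand, the rest is the standard Brannan-style device of feeding a high power of $P^*P$ into the Haagerup bound and then extracting roots.
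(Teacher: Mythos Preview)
Your proposal is correct and follows essentially the same route as the paper: moment convergence from the $q$-Wick formula gives the $\liminf$ inequality, and Bo\.{z}ejko's Haagerup-type bound applied to $(P^*P)^n$, combined with root extraction and continuity of the constant $C_{|q|}$, yields the $\limsup$ inequality. The only cosmetic difference is that the paper states Bo\.{z}ejko's inequality for homogeneous Wick words of length exactly $k$ and then sums over $k$ (using orthogonality and Cauchy--Schwarz) to obtain the degree-$\le m$ version with exponent $\alpha=3/2$, whereas you invoke the already-packaged degree-$\le k$ form directly.
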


The key fact for the proof is the Haagerup-type inequality of $q$-Gaussians proved by Bo\.{z}ejko \cite{Bozejko99}. Thanks to this inequality, we can apply Brannan's approach to show strong convergence from convergence in non-commutative distribution. This kind of argument also appears in Pisier's paper \cite[Section 1 and Section 4]{Pisier16}. Thanks to strong convergence and functional calculus, we can also show the convergence of spectrums in the Hausdorff distance. We can generalize our result in a more general setting where operators satisfy ``uniform RD property" and convergence in non-commutative $\ast$-distribution. We state this at the end of this note.  

\section*{Acknowledgement}
The basic idea of this work was found when the author visited UCSD in March. He would like to thank David Jekel and Prof.~{Kemp} for hosting his visit and inspiring discussions. He would like to thank his supervisor Prof.~{Collins} as well as Prof.~{Hayes}, for recommending him to write this note. He acknowledges support from JSPS Research Fellowships for Young Scientists, JSPS KAKENHI Grant Number JP 22J12186.

\section{Preliminary}
In this section, we introduce notations. One can find more details in \cite{Bozejko99}, \cite{BKS97}, \cite{BS91}. 

Let $H$ be a $d$-dimensional Hilbert space with orthonormal basis $e_1,\ldots,e_d$. 
We consider {\em the algebraic Fock space} of $H$ defined by 
$$\mathcal{F}_{\mathrm{alg}}(H) = \bigoplus_{k=0}^{\infty} H^{\otimes k}, $$
where $H^{\otimes 0} = \mathbb{C} e_0$ with the unit vector $e_0$. In order to express the basis of $\mathcal{F}_{\mathrm{alg}}(H)$, we use the word set $[d]^*$ which consists of letters $[d]=\{1,\ldots,d\}$ and the empty word $0$. For each word $w=w_1\cdots w_n \in [d]^*$ with $w_i \in [d]$, we define $e_w:=e_{w_1}\otimes \cdots \otimes e_{w_n}$. Note that $\{e_w\}_{w \in [d]^*}$ forms a linear basis of $\mathcal{F}_{\mathrm{alg}}(H)$. {\em The $q$-Fock space $\mathcal{F}_q(H)$} is the completion of $\mathcal{F}_{\mathrm{alg}}(H)$ with respect to {\em the $q$-inner product} $\langle \  , \ \rangle_q$ defined by 
$$\langle \xi_1 \otimes \cdots \otimes \xi_m , \eta_1 \otimes \cdots \otimes \eta_n \rangle_q = \delta_{m,n} \sum_{\pi \in S_m } q^{\mathrm{inv}(\pi)} \prod_{i=1}^m \langle \xi_i , \eta_{\pi(i)} \rangle_H$$
where $\delta_{m,n}$ is the Kronecker's delta and $S_m$ is the symmetric group with degree $m$ and $\mathrm{inv}(\pi)=\#\{(i,j)\in [m]^2; \ i<j, \ \pi(i)> \pi(j) \}$ is the number of inversions in $\pi$.
\begin{dfn}
{\em The $q$-Gaussians} $A^{(q)}=(A_1^{(q)},\ldots A_d^{(q)})$ are defined by operators on $\mathcal{F}_q(H)$; 
$$A^{(q)}_i = l_i + l_i^* $$
where $l_i$ is the left creation operator defined by $l_i e_w = e_{iw}$. 
\end{dfn}
The von Neumann algebra $W^*(A^{(q)})$ generated by $q$-Gaussians $A^{(q)}$ has a faithful normal tracial state given by 
$$\tau(X) = \langle X e_0, e_0 \rangle_q,$$
which forms a tracial $W^*$-probability space. 
As well as independent Gaussians satisfying the Wick formula, it is known that joint moments of $q$-Gaussians are characterized by pair partitions and the number of crossings.
\begin{thm}[Theorem 1 in \cite{FB70}]
For any $i_1,\ldots, i_n \in [d]$ and $-1\le q\le 1$, we have
$$\tau(A^{(q)}_{i_1} \cdots A^{(q)}_{i_n})=\sum_{\pi \in P_2(n)} q^{\mathrm{cr}(\pi)} \prod_{(k,l)\in \pi} \delta_{i_k,i_l} $$
where $P_2(n)$ is the set of pair partitions on $[n]$ and $\mathrm{cr}$ is the number of crossings.
\end{thm}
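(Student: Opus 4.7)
The plan is to prove the Frisch--Bourret moment formula by expanding each $A^{(q)}_{i_k} = l_{i_k} + l_{i_k}^*$, evaluating the resulting $2^n$-term sum on the vacuum vector $e_0$, and indexing the surviving contributions by pair partitions of $[n]$.

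First, expanding each factor yields
$$\tau(A^{(q)}_{i_1}\cdots A^{(q)}_{i_n}) = \sum_{\epsilon \in \{+,-\}^n}\bigl\langle l_{i_1}^{\epsilon_1}\cdots l_{i_n}^{\epsilon_n}e_0,\ e_0\bigr\rangle_q,$$
where $l_i^+ := l_i$ and $l_i^- := l_i^*$. Since $l_i^* e_0 = 0$ and the operators act on $e_0$ from the right, only sign sequences whose partial sums, read from the right, remain nonnegative and terminate at zero can contribute; in particular $n$ must be even.

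Second, I derive and invoke the explicit formula
$$l_i^*(e_{j_1}\otimes\cdots\otimes e_{j_m}) = \sum_{k=1}^m q^{k-1}\delta_{i,j_k}\,e_{j_1}\otimes\cdots\otimes\widehat{e_{j_k}}\otimes\cdots\otimes e_{j_m},$$
which follows from the definition of $\langle\cdot,\cdot\rangle_q$ by adjointness with the creation operator $l_i$. Applied iteratively (right to left), each annihilation step further splits into a sum over choices of which tensor slot is removed. A complete set of choices, together with its sign sequence, defines a pair partition $\pi$ of $[n]$ in which the annihilation at position $r$ (so $\epsilon_r = -$) is paired with the creation at position $s>r$ (so $\epsilon_s = +$) whose letter $e_{i_s}$ it destroys. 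The accumulated Kronecker deltas combine into the factor $\prod_{(k,l)\in\pi}\delta_{i_k,i_l}$.

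The main combinatorial obstacle is to identify the cumulative power of $q$ attached to a given $\pi$ with $\mathrm{cr}(\pi)$. I plan to maintain, by induction on the number of operators processed, the invariant that after positions $n, n-1, \ldots, r+1$ have acted (for a fixed choice realizing $\pi$), the intermediate tensor equals $e_{i_{s_1}}\otimes\cdots\otimes e_{i_{s_m}}$, with $r < s_1 < \cdots < s_m$ running over the currently \emph{open} creation positions (those $s > r$ whose $\pi$-partner $r'$ satisfies $r' \le r$). When the annihilation at position $r$ fires to destroy the slot carrying $e_{i_s}$, the prefactor $q^{k-1}$ has exponent $k-1 = |\{\{r',s'\}\in\pi : r'<r,\ r<s'<s\}|$. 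Summing over $r$ counts each crossing of $\pi$ exactly once---namely at the step processing the pair with the larger annihilation position---yielding the cumulative weight $q^{\mathrm{cr}(\pi)}$. Summing over $\pi\in P_2(n)$ then gives the claimed identity.
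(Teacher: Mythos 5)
Your proof is correct. Note that the paper itself offers no proof of this statement---it is imported as Theorem~1 of Frisch--Bourret (and is also standard from Bo\.{z}ejko--Speicher)---so there is no in-paper argument to compare against; your derivation is the standard Fock-space one. The three ingredients you use are all sound: the adjoint formula $l_i^*(e_{j_1}\otimes\cdots\otimes e_{j_m})=\sum_k q^{k-1}\delta_{i,j_k}\,e_{j_1}\otimes\cdots\otimes\widehat{e_{j_k}}\otimes\cdots\otimes e_{j_m}$ is the correct adjoint of left creation with respect to $\langle\cdot,\cdot\rangle_q$; the surviving terms are indexed exactly by pair partitions in which the larger element of each pair creates and the smaller annihilates (the lattice-path condition you state is automatic for such $\pi$, and the Kronecker deltas assemble into $\prod_{(k,l)\in\pi}\delta_{i_k,i_l}$); and your ordering invariant (slots listed left to right by increasing creation position, since later-acting creations prepend on the left) correctly identifies the exponent $k-1$ at the annihilation step $r$ with the number of pairs $\{r',s'\}$ satisfying $r'<r<s'<s$, i.e.\ the crossings of $\{r,s\}$ in which $r$ is the larger annihilation position, so each crossing is counted exactly once. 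A quick sanity check on $\tau(A_1A_2A_1A_2)=q$ confirms the bookkeeping.
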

From this theorem, we can see convergence of $q$-Gaussians in non-commutative distribution; for $-1\le q_0 \le 1$ and any non-commutative polynomial $P$, we have
$$\lim_{q\to q_0} \tau[P(A^{(q)})] = \tau[P(A^{(q_0)})].$$

Let $L^2(W^*(A^{(q)}),\tau)$ be the Hilbert space obtained from GNS representation of $W^*(A^{(q)})$ with respect to $\tau$. Then it is known that there is an isomorphism $D$ between Hilbert spaces $\mathcal{F}_q(H)$ and $L^2(W^*(A^{(q)}),\tau)$ which maps $\mathcal{F}_{\mathrm{alg}}(H)$ to the algebra $\mathbb{C}\langle A^{(q)} \rangle$ of non-commutative polynomials in $A^{(q)}$. 

{\em The $q$-Wick polynomials} $\{e_w^{(q)}\}_{w \in [d]^*}$ are defined by $e_w^{(q)}=D(e_w)$. Let us denote the operator norm on $B(L^2(W^*(A^{(q)}),\tau))$ by $\| \ \|$. We also use $L^p$-norm $\| \ \|_p$ ($1\le p < \infty)$ defined by $\| X \|_p = \tau[(X^*X)^{\frac{p}{2}}]^{\frac{1}{p}}$. Note that we have 
$$\lim_{p \to \infty} \|X\|_p = \|X\| $$ 
for $X \in W^*(A^{(q)})$ since $\tau$ is a faithful state.
The following inequality, which is proved by Bo\.{z}ejko, is crucial in this paper. 
\begin{thm}[Proposition 2.1 in \cite{Bozejko99}]\label{Haagerup}
For each $k \in \mathbb{N}$ and $-1<q<1$, we have
$$\left\|\sum_{|w|=k} \alpha_w e_w^{(q)} \right\| \le (k+1)C_{|q|}^{\frac{3}{2}} \left \|\sum_{|w|=k} \alpha_w e_w^{(q)}\right\|_2$$
where $\alpha_w \in \mathbb{C}$ and $C_{|q|}^{-1}=\prod_{m=1}^{\infty}(1-|q|^m).$
\end{thm}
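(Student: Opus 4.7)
The plan is to identify the element $X_k := \sum_{|w|=k}\alpha_w e_w^{(q)}$ with the vector $f := X_k e_0 = \sum_{|w|=k}\alpha_w e_w \in H^{\otimes k}$, and then to bound the operator norm of $X_k$ on $\mathcal{F}_q(H)$ by decomposing its action according to how many legs of $f$ get contracted with the argument. Since $\tau(\,\cdot\,) = \langle\,\cdot\, e_0, e_0\rangle_q$ and the isomorphism $D$ sends $e_w$ to $e_w^{(q)}$, one has $\|X_k\|_2 = \|X_k e_0\|_{\mathcal{F}_q(H)} = \sqrt{\langle f,f\rangle_q}$, so the claim reduces to showing $\|X_k\| \le (k+1)C_{|q|}^{3/2}\sqrt{\langle f,f\rangle_q}$.

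Using the $q$-commutation relation $l_i l_j^* - q l_j^* l_i = \delta_{ij}$ together with the Wick recursion $A_i^{(q)} e_w^{(q)} = e_{iw}^{(q)} + \sum_{r=1}^{|w|} q^{r-1}\delta_{i,w_r}\, e_{w_1\cdots w_{r-1} w_{r+1}\cdots w_{|w|}}^{(q)}$, I would write $X_k = \sum_{j=0}^{k} W^{(j)}(f)$, where $W^{(j)}(f)$ sends $H^{\otimes n}$ into $H^{\otimes(n+k-2j)}$ by pairing $j$ of the $k$ tensor factors of $f$ with $j$ tensor factors of the input, weighted by powers of $q$ counting crossings of the pairing, and then creating the remaining $k-j$ factors on top. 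The case $j=0$ is the multi-creation $\eta\mapsto f\otimes\eta$; the case $j=k$ is the adjoint multi-annihilation; the intermediate pieces interpolate between these.

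The central estimate, and the main obstacle, is the uniform bound $\|W^{(j)}(f)\| \le C_{|q|}^{3/2}\sqrt{\langle f,f\rangle_q}$ for every $j=0,1,\ldots,k$. The key tool is the identity $\langle\,\cdot\,,\,\cdot\,\rangle_q = \langle\,\cdot\,,\,P_q^{(n)}\,\cdot\,\rangle$ on $H^{\otimes n}$, where $P_q^{(n)} := \sum_{\pi\in S_n} q^{\mathrm{inv}(\pi)}\pi$ acts on the standard (unsymmetrized) $H^{\otimes n}$, combined with the Bo\.{z}ejko-Speicher positivity bound $C_{|q|}^{-1}I \le P_q^{(n)} \le C_{|q|}I$. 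For $j=0$ this gives directly $\langle f\otimes\eta, f\otimes\eta\rangle_q \le C_{|q|}\|f\|^2\|\eta\|^2 \le C_{|q|}^3\langle f,f\rangle_q\langle\eta,\eta\rangle_q$, with $\|\cdot\|$ the standard tensor norm; the same bookkeeping of three $P_q$-factors (source, target, and the $H^{\otimes k}$ containing $f$) produces the exponent $3/2$ uniformly in $j$, provided one can cleanly absorb the $q$-weights appearing in the Wick expansion of $W^{(j)}(f)$ into the symmetrization operators.

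Once this pointwise bound is in place, applying the triangle inequality to the finite sum $X_k = \sum_{j=0}^k W^{(j)}(f)$ immediately gives
\[
\|X_k\| \le \sum_{j=0}^{k}\|W^{(j)}(f)\| \le (k+1)C_{|q|}^{3/2}\sqrt{\langle f,f\rangle_q} = (k+1)C_{|q|}^{3/2}\|X_k\|_2,
\]
which is the asserted Haagerup-type inequality. The delicate part is entirely in Step 3: tracking the powers of $q$ so that the three $P_q$-factors are the only obstruction and so that the bound is uniform in $n$ and in $j$.
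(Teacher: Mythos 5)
First, a point of order: the paper does not prove this statement at all --- it is imported verbatim from Bo\.{z}ejko \cite{Bozejko99} --- so there is no in-paper argument to compare against. Judged on its own, your architecture is the right one (and is in fact the skeleton of Bo\.{z}ejko's actual proof): identify $X_k$ with $f=X_ke_0\in H^{\otimes k}$ (the reduction $\|X_k\|_2=\|f\|_q$ is correct), decompose the Wick operator as $\sum_{j=0}^{k}W^{(j)}(f)$ according to the number of contracted legs, prove a bound $\|W^{(j)}(f)\|\le C_{|q|}^{3/2}\|f\|_q$ uniform in $j$, and let the triangle inequality produce the factor $k+1$.

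The genuine gap is in the central estimate, and it is not merely a missing computation: the operator inequality you rely on, $C_{|q|}^{-1}I\le P_q^{(n)}\le C_{|q|}I$, is false on the upper side. Testing $P_q^{(n)}=\sum_{\pi\in S_n}q^{\mathrm{inv}(\pi)}\pi$ on $e_1^{\otimes n}$ produces the eigenvalue $[n]_q!=\prod_{k=1}^{n}\frac{1-q^k}{1-q}$, which for $0<q<1$ grows like $(1-q)^{-n}$, so $\|P_q^{(n)}\|$ admits no bound uniform in $n$. Only the lower bound $P_q^{(n)}\ge\prod_{k=1}^{n}(1-|q|^k)\,I\ge C_{|q|}^{-1}I$ (Bo\.{z}ejko's positivity estimate) holds, and it gives only the one-sided comparison $\|\cdot\|_0\le C_{|q|}^{1/2}\|\cdot\|_q$ between the free and $q$-norms. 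Consequently your $j=0$ computation $\langle f\otimes\eta,f\otimes\eta\rangle_q\le C_{|q|}\|f\|^2\|\eta\|^2$, and the ``three $P_q$-factors'' bookkeeping built on it, collapse. The correct replacement --- and the actual heart of Bo\.{z}ejko's proof --- is the factorization $P_q^{(m+n)}=\bigl(P_q^{(m)}\otimes P_q^{(n)}\bigr)R_{m,n}^*$ through the $q$-shuffle operator $R_{m,n}$, whose norm \emph{is} uniformly bounded: its $|q|$-weighted term count is the Gaussian binomial $\binom{m+n}{m}_{|q|}\le\prod_{i\ge1}(1-|q|^i)^{-1}=C_{|q|}$, in contrast to the unbounded $[n]_{|q|}!$ of the full symmetrizer. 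Combining that single factor of $C_{|q|}$ with the free-Fock-space norm of a creation--annihilation word and one application of the lower bound on $P_q^{(n)}$ (contributing $C_{|q|}^{1/2}$) is what yields the exponent $3/2$. Until $\|W^{(j)}(f)\|\le C_{|q|}^{3/2}\|f\|_q$ is derived from these ingredients rather than from the false two-sided bound, the proof is incomplete.
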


\begin{rem}
This kind of inequality is proved first by Haagerup for generators of free groups \cite{Haagerup78}. Bo\.{z}ejko's Haargerup-type inequality for $q$-Gaussians is extended to a more general setting of Coxeter relations in \cite{Krolak02}. 
\end{rem}

\section{The proof of the main theorem}
\begin{thm}\label{main_result}
For any $-1<q_0<1$, strong convergence of $q$-Gaussians $A^{(q)}=(A_1^{(q)},\ldots,A_d^{(q)})$ holds at $q_0$, i.e. for any non-commutative polynomial $P$, 
$$\lim_{q\to q_0}\|P(A^{(q)})\| =\|P(A^{(q_0)})\|.$$
\end{thm}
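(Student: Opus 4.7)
The plan is to prove the two one-sided inequalities
$\liminf_{q\to q_0}\|P(A^{(q)})\|\ge \|P(A^{(q_0)})\|$ and $\limsup_{q\to q_0}\|P(A^{(q)})\|\le \|P(A^{(q_0)})\|$
separately. The $\liminf$ direction is automatic from convergence in non-commutative distribution: applying the latter to the polynomial $(P^*P)^k$ yields $\|P(A^{(q)})\|_{2k}\to \|P(A^{(q_0)})\|_{2k}$ for each fixed $k$, and since $\|X\|_{2k}\le \|X\|$ while $\|X\|_{2k}\to \|X\|$ as $k\to\infty$ (faithfulness of $\tau$, as recorded in the Preliminary), exchanging the two limits gives $\|P(A^{(q_0)})\|\le \liminf_{q\to q_0}\|P(A^{(q)})\|$.

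For the $\limsup$ direction I would follow Brannan's moment idea and apply Theorem \ref{Haagerup} not to $P$ itself but to a high power of $P^*P$. Fix $m\ge 1$ and set $N=2m\deg P$. Since $(P^*P)^m(A^{(q)})e_0\in\bigoplus_{k=0}^N H^{\otimes k}$, the isomorphism $D$ produces a Wick decomposition
$$(P^*P)^m(A^{(q)})=\sum_{k=0}^N R^{(q)}_{m,k},\qquad R^{(q)}_{m,k}\in\mathrm{span}\{e_w^{(q)}:|w|=k\},$$
whose components are pairwise $L^2$-orthogonal because the graded pieces $H^{\otimes k}$ of $\mathcal{F}_q(H)$ are mutually $q$-orthogonal and $D$ is an isometry. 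Applying Theorem \ref{Haagerup} level by level and combining the triangle inequality with Cauchy--Schwarz,
$$\|(P^*P)^m(A^{(q)})\|\le \sum_{k=0}^N (k+1)\,C_{|q|}^{3/2}\,\|R^{(q)}_{m,k}\|_2\le (N+1)^{3/2} C_{|q|}^{3/2}\,\|(P^*P)^m(A^{(q)})\|_2.$$
Since $\|(P^*P)^m(A^{(q)})\|=\|P(A^{(q)})\|^{2m}$ by the $C^*$-identity and $\|(P^*P)^m(A^{(q)})\|_2=\tau((P^*P)^{2m})^{1/2}=\|P(A^{(q)})\|_{4m}^{2m}$, taking the $(2m)$-th root yields
$$\|P(A^{(q)})\|\le \bigl[(N+1)^{3/2}C_{|q|}^{3/2}\bigr]^{1/(2m)}\,\|P(A^{(q)})\|_{4m}.$$

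It remains to pass to the limit first in $q$ and then in $m$. For fixed $m$, continuity of $q\mapsto C_{|q|}$ at $q_0\in(-1,1)$ together with convergence in non-commutative distribution gives
$$\limsup_{q\to q_0}\|P(A^{(q)})\|\le \bigl[(N+1)^{3/2}C_{|q_0|}^{3/2}\bigr]^{1/(2m)}\,\|P(A^{(q_0)})\|_{4m}.$$
Letting $m\to\infty$, the prefactor tends to $1$ (since $N$ is linear in $m$) and the right factor tends to $\|P(A^{(q_0)})\|$, completing the upper bound. The decisive step, and the reason a direct application of the Haagerup-type inequality does not suffice, is this amortization trick: the polynomial-in-degree factor $(N+1)^{3/2}$ dissolves under the $(2m)$-th root, while the $L^{4m}$-moment surviving on the right is precisely the kind of object that the distributional convergence given by the Frisch--Bourret formula can control.
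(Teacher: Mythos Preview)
Your proof is correct and follows essentially the same route as the paper: the $\liminf$ bound via $L^{2k}$-moments and faithfulness of $\tau$, and the $\limsup$ bound by applying Bo\.{z}ejko's Haagerup-type inequality to the Wick decomposition of $(P^*P)^m(A^{(q)})$, extracting the $(2m)$-th root, and letting $q\to q_0$ then $m\to\infty$. The only cosmetic difference is that the paper first records the inequality $\|Q(A^{(q)})\|\le(\deg Q+1)^{3/2}C_{|q|}^{3/2}\|Q(A^{(q)})\|_2$ for an arbitrary polynomial $Q$ and then specializes to $Q=(P^*P)^n$, whereas you go directly to $(P^*P)^m$; the bookkeeping (your Cauchy--Schwarz on $\sum_k(k+1)\|R^{(q)}_{m,k}\|_2$ versus the paper's two-step bound) yields the same $(N+1)^{3/2}$ factor.
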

\begin{proof}
Since we have
$$\|P(A^{(q)})\|_{2n} =\tau\left[(P^*P)^n(A^{(q)})\right]^{\frac{1}{2n}} \le \|P(A^{(q)})\|,$$
  it is obvious from convergence in non-commutative distribution that
$$ \|P(A^{(q_0)})\| \le \liminf_{q\to q_0}\|P(A^{(q)})\|.$$
For the other direction, we will apply Brannan's approach \cite{Brannan18} with Bo\.{z}ejko's Haagerup-type inequality in Theorem \ref{Haagerup}.
Let $P$ be any non-commutative polynomial of degree $m$. Then we can write $P(A^{(q)})=\sum_{k=0}^{m}\sum_{|w|=k}\alpha_w e_w^{(q)}$, and we have
\begin{eqnarray*}
\|P(A^{(q)})\| &\le& \sum_{k=0}^{m} \|\sum_{|w|=k}\alpha_w e_w^{(q)}\| \\
&\le& \sum_{k=0}^{m} (k+1)C_{|q|}^{\frac{3}{2}} \left \|\sum_{|w|=k} \alpha_w e_w^{(q)}\right\|_2 \\
&\le& (m+1) C_{|q|}^{\frac{3}{2}} \sum_{k=0}^{m} \left \|\sum_{|w|=k} \alpha_w e_w^{(q)}\right\|_2 \\
&\le& (m+1)^{\frac{3}{2}} C_{|q|}^{\frac{3}{2}} \left \|\sum_{k=0}^{m}\sum_{|w|=k} \alpha_w e_w^{(q)}\right\|_2 \\
&=& (m+1)^{\frac{3}{2}} C_{|q|}^{\frac{3}{2}} \|P(A^{(q)})\|_2.
\end{eqnarray*}
where we use orthogonality of $e_w^{(q)}$ with respect to word length. Now, we apply this inequality to $(P^*P)^{n}$ which has a degree $2mn$. Then we have
\begin{eqnarray*}
    \|(P^*P)^{n}(A^{(q)})\|=\|P(A^{(q)})\|^{2n} \le (2mn+1)^{\frac{3}{2}} C_{|q|}^{\frac{3}{2}} \|(P^*P)^{n}(A^{(q)})\|_2.
\end{eqnarray*}
By taking a limit $q \to q_0$ after taking the power of $\frac{1}{2n}$ in both sides, we have from the convergence in non-commutative distribution,
$$\limsup_{q\to q_0} \|P(A^{(q)})\| \le (2mn+1)^{\frac{3}{4n}}C_{|q_0|}^{\frac{3}{4n}} \|(P^*P)^{n}(A^{(q_0)})\|_2^{\frac{1}{2n}} \underset{n\to \infty}{\to} \|P(A^{(q_0)})\|.$$
Thus we have strong convergence of $q$-Gaussians.
\end{proof}
\begin{rem}
When $q_0=\pm 1$, our proof does not work. Actually, the $1$-Gaussian is the standard Gaussian, which is unbounded. The $(-1)$-Gaussian is the discrete measure which takes $\pm 1$ with probability $\frac{1}{2}$. For $-1<q<1$, it is known that the $q$-Gaussian has a density function which is supported on $[-\frac{2}{\sqrt{1-q}},\frac{2}{\sqrt{1-q}}]$ (cf. Theorem 1.10 in \cite{BKS97}). Since $\lim_{q\to -1}\frac{2}{\sqrt{1-q}} =\sqrt{2}$, we don't have strong convergence at $q_0=-1$.   
\end{rem}

As a corollary of the main theorem, we obtain the convergence of spectrums of a self-adjoint polynomial in the Hausdorff distance.

\begin{cor}\label{Hausdorff}
For any $-1<q_0<1$ and any self-adjoint polynomial $P$, we have
$$\lim_{q\to q_0} d_H(\sigma[P(A^{(q)})],\sigma[P(A^{(q_0)})]) = 0$$
where $\sigma[P(A^{(q)})]$ is the spectrum of $P(A^{(q)})$ and $d_H(\cdot, \cdot )$ is the Hausdorff distance.
\end{cor}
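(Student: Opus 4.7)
Set $X_q := P(A^{(q)})$, which is a bounded self-adjoint operator in the tracial $W^*$-probability space $(W^*(A^{(q)}),\tau)$. Since $\tau$ is faithful, $\sigma(X_q)$ coincides with the support of the spectral distribution $\mu_{X_q}$ of $X_q$ under $\tau$. By Theorem \ref{main_result} applied to $P^*P$, the norms $\|X_q\|$ converge to $\|X_{q_0}\|$, so there exists $R>0$ with $\sigma(X_q)\subset[-R,R]$ for all $q$ in a neighborhood of $q_0$. The plan is to prove Hausdorff convergence by establishing the two standard one-sided inclusions: (a) every point of $\sigma(X_{q_0})$ is approached by $\sigma(X_q)$, and (b) eventually $\sigma(X_q)$ lies in any prescribed $\varepsilon$-neighborhood of $\sigma(X_{q_0})$.

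For (a), I would use only convergence in non-commutative distribution. Given $\lambda\in\sigma(X_{q_0})$ and $\varepsilon>0$, pick a continuous nonnegative bump function $f$ supported in $(\lambda-\varepsilon,\lambda+\varepsilon)$ with $\int f\,d\mu_{X_{q_0}}>0$ (possible since $\lambda$ lies in the support of $\mu_{X_{q_0}}$). Approximate $f$ uniformly on $[-R,R]$ by a polynomial and use convergence of moments to conclude $\int f\,d\mu_{X_q}\to\int f\,d\mu_{X_{q_0}}>0$; hence $\mu_{X_q}$ assigns positive mass to $(\lambda-\varepsilon,\lambda+\varepsilon)$ for $q$ close to $q_0$, giving $\sigma(X_q)\cap(\lambda-\varepsilon,\lambda+\varepsilon)\neq\emptyset$.

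For (b), which is where the full force of Theorem \ref{main_result} is needed and which I expect to be the main obstacle, let $U_\varepsilon$ denote the open $\varepsilon$-neighborhood of $\sigma(X_{q_0})$ in $\mathbb{R}$. Choose a continuous $f:[-R,R]\to[0,1]$ that vanishes on $\sigma(X_{q_0})$ and equals $1$ on $[-R,R]\setminus U_\varepsilon$. By Weierstrass, fix a polynomial $p$ with $\|p-f\|_{\infty,[-R,R]}<\delta$; then by continuous functional calculus $\|f(X_q)-p(X_q)\|\le\delta$ for all $q$ with $\sigma(X_q)\subset[-R,R]$. Strong convergence gives $\|p(X_q)\|\to\|p(X_{q_0})\|\le\|f(X_{q_0})\|+\delta=\delta$, so $\limsup_{q\to q_0}\|f(X_q)\|\le 2\delta$; letting $\delta\to 0$ yields $\|f(X_q)\|\to 0$. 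Since $\|f(X_q)\|=\sup_{t\in\sigma(X_q)}f(t)\ge 1$ whenever $\sigma(X_q)\not\subset U_\varepsilon$, we conclude that $\sigma(X_q)\subset U_\varepsilon$ for $q$ sufficiently close to $q_0$. Combining (a) and (b) gives $d_H(\sigma(X_q),\sigma(X_{q_0}))\to 0$. The step requiring care is the polynomial approximation of $f$, which is exactly the point at which the passage from distributional to strong convergence becomes essential.
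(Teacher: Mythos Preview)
Your argument is correct and follows essentially the same route as the paper's proof: both establish the two one-sided inclusions via continuous bump functions, polynomial approximation (Stone--Weierstrass), and the passage from distributional/strong convergence to convergence of $\|f(X_q)\|$. The only minor omission is that in part (a) your argument is pointwise in $\lambda$, whereas Hausdorff convergence requires uniformity over $\lambda\in\sigma(X_{q_0})$; the paper handles this by first covering the compact set $\sigma(X_{q_0})$ with finitely many $\varepsilon/2$-balls and running your bump-function argument on each, which you should add.
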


\begin{proof}
Let $\epsilon > 0$ be given. Since $\sigma[P(A^{(q_0)})]$ is compact, we can take $\{x_k\}_{k=1}^m \subset \sigma[P(A^{(q_0)})]$ such that $\sigma[P(A^{(q_0)})] \subset \bigcup_{k=1}^m N_{\frac{\epsilon}{2}}(x_k)$ where $N_{\frac{\epsilon}{2}}(x_k)$ is the $\frac{\epsilon}{2}$-neighborhood of $x_k$. For each $k=1,\ldots,m$, we take a continuous function $f_k$ on $\mathbb{R}$ such that $0\le f_k \le 1$ and $f_k(x_k)=1$ and $f_k|_{N_{\frac{\epsilon}{2}}(x_k)^c} =0$. Since $\|f_k(P(A^{(q_0)}))\|=1$ for all $k$, we also have $\|f_k(P(A^{(q)}))\|>0$ if $|q-q_0|$ is sufficiently small by Stone–Weierstrass theorem and strong convergence (actually, convergence in non-commutative distribution is enough for this claim). This implies $N_{\frac{\epsilon}{2}}(x_k) \cap \sigma[P(A^{(q)})]\neq \emptyset$ for each $k$ and we have
$$\sigma[P(A^{(q_0)})] \subset \bigcup_{k=1}^m N_{\frac{\epsilon}{2}}(x_k)\subset \sigma[P(A^{(q)})]+(-\epsilon,\epsilon).$$

On the other hand, we take a continuous function $g$ on $\mathbb{R}$ such that $0\le g \le 1$ and $g=0$ on $\sigma[P(A^{(q_0)})]$ and $g=1$ on the complement of $\sigma[P(A^{(q_0)})]+(-\epsilon,\epsilon)$. Since $g(P(A^{(q_0)}))=0$, we similarly have $\|g(P(A^{(q)}))\| < 1$ if $|q-q_0|$ is sufficiently small, which implies
$$\sigma[P(A^{(q)})] \subset \sigma[P(A^{(q_0)})]+(-\epsilon,\epsilon).$$
Therefore, we obtain $d_H(\sigma[P(A^{(q)})],\sigma[P(A^{(q_0)})])<\epsilon$ if $|q-q_0|$ is sufficiently small.
\end{proof}

This kind of argument actually holds for any tuples of operators in $C^*$-algebras with faithful states which satisfy ``uniform RD property" and convergence in non-commutative $\ast$-distribution. 
\begin{prop}
Let $(\mathcal{A}_n,\phi_n)_{n\in \mathbb{N}}$ and $(\mathcal{A}_{\infty},\phi_{\infty})$ be couples of a $C^*$-algebra $\mathcal{A}_n, \mathcal{A}_{\infty}$ and a faithful state $\phi_n, \phi_{\infty}$. Let $X^{(n)}=(X_1^{(n)},\ldots,X_d^{(n)})$ be a $d$-tuple of operators (not necessarily self-adjoint) in $\mathcal{A}_n$ for each $n \in \mathbb{N} \cup \{\infty\}$. Assume $(X^{(n)})_{n\in \mathbb{N} \cup \{\infty\}}$ satisfy the following two properties 

\begin{itemize} 
\item ``uniform RD property''; there exists constants $C,D >0$ such that for any $n \in \mathbb{N}$ and non-commutative $\ast$-polynomial $P$, $$\| P(X^{(n)}) \| \le C (\mathrm{deg}P + 1)^{D}\|P(X^{(n)}\|_2. $$ where $\mathrm{deg}P$ is the degree of $P$.\\ 
\item convergence in non-commutative $\ast$-distribution; for any non-commutative $\ast$-polynomial $P$, $$\lim_{n \to \infty}\phi_n[P(X^{(n)})] = \phi_{\infty}[P(X^{(\infty)})].$$
\end{itemize} 
Then, $X^{(n)}$ strongly converges to $X^{(\infty)}$; for any $\ast$-polynomial $P$, $$\lim_{n \to \infty} \|P(X^{(n)})\| = \|P(X^{(\infty)})\|.$$
We also have for any self-adjoint $\ast$-polynomial $P$, 
$$\lim_{n \to \infty} d_H(\sigma[P(X^{(n)})],\sigma[P(X^{(\infty)})])=0.$$
\end{prop}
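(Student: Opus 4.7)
The plan is to adapt the proof of Theorem~\ref{main_result} almost word for word, with the uniform RD hypothesis playing the role of Bo\.{z}ejko's degree-graded inequality. The argument is actually cleaner in this generality, since no Wick-type decomposition into homogeneous components is needed: the RD bound applies directly to a single $*$-polynomial. Throughout I will use the shorthand $\|Y\|_{2k}:=\phi((Y^*Y)^k)^{1/(2k)}$, where $\phi$ is whichever state is in play, and I will rely on only two properties of this quantity: the trivial bound $\|Y\|_{2k}\le \|Y\|$ (which holds because $\phi$ is a state and $(Y^*Y)^k\le \|Y\|^{2k}$), and the limit $\|Y\|_{2k}\to \|Y\|$ as $k\to\infty$, which follows from the spectral-radius identity $\phi(Z^m)^{1/m}\to \|Z\|$ for positive $Z$ under a faithful state, applied to $Z=Y^*Y$.

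For the easy direction, convergence in non-commutative $*$-distribution applied to the $*$-polynomial $(P^*P)^k$ yields $\lim_n \|P(X^{(n)})\|_{2k} = \|P(X^{(\infty)})\|_{2k}$ for each fixed $k$. Combining with $\|P(X^{(n)})\|_{2k}\le \|P(X^{(n)})\|$ and sending first $n\to\infty$ and then $k\to\infty$ gives $\|P(X^{(\infty)})\|\le \liminf_n \|P(X^{(n)})\|$. For the reverse inequality, I apply the uniform RD property to $(P^*P)^k$, which is a $*$-polynomial of degree $2km$ when $\deg P=m$. Since $(P^*P)^k(X^{(n)})$ is self-adjoint with operator norm $\|P(X^{(n)})\|^{2k}$ and $\|\cdot\|_2$-norm $\|P(X^{(n)})\|_{4k}^{2k}$, the hypothesis becomes
\begin{equation*}
\|P(X^{(n)})\|^{2k}\le C(2km+1)^{D}\,\|P(X^{(n)})\|_{4k}^{2k}.
\end{equation*}
Taking $(2k)$-th roots, passing to $\limsup_{n\to\infty}$ (the right-hand side converges because $\|P(X^{(n)})\|_{4k}^{4k}=\phi_n((P^*P)^{2k}(X^{(n)}))$ is a moment), and then letting $k\to\infty$ collapses the prefactor $C^{1/(2k)}(2km+1)^{D/(2k)}$ to $1$ and $\|P(X^{(\infty)})\|_{4k}$ to $\|P(X^{(\infty)})\|$, yielding $\limsup_n \|P(X^{(n)})\|\le \|P(X^{(\infty)})\|$ and hence strong convergence.

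For the Hausdorff statement I would copy the proof of Corollary~\ref{Hausdorff} verbatim: cover $\sigma[P(X^{(\infty)})]$ by finitely many half-$\varepsilon$ neighborhoods, build continuous bump functions peaked at the chosen points and a separator function $g$ vanishing on $\sigma[P(X^{(\infty)})]$ and equal to $1$ outside its $\varepsilon$-enlargement, and use Stone--Weierstrass to approximate these by polynomials in the self-adjoint element $P(X^{(\infty)})$. Strong convergence (established above) then transfers the norm identities $\|f_k(P(X^{(\infty)}))\|=1$ and $\|g(P(X^{(\infty)}))\|=0$ to the operators $P(X^{(n)})$ for all sufficiently large $n$, giving both inclusions. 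Nothing in that argument uses any structural feature of $q$-Gaussians.

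The only real point to check, rather than a genuine obstacle, is that the states $\phi_n$ are only assumed \emph{faithful} and not tracial, so $\|\cdot\|_{2k}$ should be used purely as the shorthand above and invoked only through the two facts $\|Y\|_{2k}\le \|Y\|$ and $\|Y\|_{2k}\to\|Y\|$; no triangle inequality or H\"older duality for $\|\cdot\|_{2k}$ is needed anywhere in the argument, so the non-tracial setting causes no difficulty.
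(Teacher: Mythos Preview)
Your proposal is correct and follows essentially the same approach as the paper: apply the uniform RD bound to $(P^*P)^k$, take $(2k)$-th roots, use convergence in $*$-distribution, and let $k\to\infty$; then repeat the Corollary~\ref{Hausdorff} argument for the Hausdorff statement. Your write-up is in fact more detailed than the paper's own proof (which simply refers back to Theorem~\ref{main_result} and Corollary~\ref{Hausdorff}), and your observations that no homogeneous decomposition is needed here and that faithfulness alone suffices for $\|Y\|_{2k}\to\|Y\|$ are both correct and worth making explicit.
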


\begin{proof}As well as the proof of the main theorem, we can estimate $\|(P^*P)^k(X^{(n)})\|$ by using ``uniform RD property". Since $\lim_{k \to \infty}[(2k \cdot \mathrm{deg}P +1 )^D]^{\frac{1}{2k}} =1$, we obtain strong convergence from convergence in non-commutative $\ast$-distribution. By a similar argument in the proof of Corollary \ref{Hausdorff}, we also obtain the convergence of spectrums in the Hausdorff distance.\end{proof}

\begin{rem}
We conclude with a remark on the spectral radius $r[P(X^{(n)})]$ for a non-self-adjoint polynomial $P$. For the same reason that the infimum of continuous functions is upper semi-continuous, we can say by strong convergence (note that $r(T)=\inf_{k}\|T^k\|^{\frac{1}{k}}$ for a bounded operator $T$),
$$\limsup_{n\to \infty} r[P(X^{(n)})] \le r[P(X^{(\infty)})].$$
In particular, the spectral radius $r[P(A^{(q)})]$ is upper semi-continuous with respect to $q$.
We were not able to show the lower semi-continuity. For this problem, a quantitative estimate for the difference between $\|P(A^{(q)})^k\|^{\frac{1}{k}}$ and $\|P(A^{(q')})^k\|^{\frac{1}{k}}$ with different $q,q' \in (-1,1)$ should be helpful. By the Haagerup-type inequality, it is enough to see the difference between $\|P(A^{(q)})^k\|_2^{\frac{1}{k}}$ and $\|P(A^{(q')})^k\|_2^{\frac{1}{k}}$ for sufficiently large $k$. We leave it for future work.
\end{rem}

\end{document}